\newcommand{\scal}[2]{\langle #1,#2\rangle}
\newcommand{\rr}[1]{\mathbf R^{#1}}
\newcommand{\nn}[1]{\mathbf N^{#1}}
\newcommand{\cc}[1]{\mathbf C^{#1}}
\newcommand{\nm}[2]{\Vert #1\Vert _{#2}}
\newcommand{\nmm}[1]{\Vert #1\Vert }
\newcommand{\sets}[2]{\{ \, #1\, ;\, #2\, \} }
\newcommand{\cdo}{\, \cdot \, }
\newcommand{\vrum}{\vspace{0.1cm}}
\newcommand{\maclA}{\mathcal A}
\newcommand{\maclB}{\mathcal B}
\newcommand{\maclH}{\mathcal H}
\newcommand{\mascD}{\mathscr D}
\newcommand{\mascS}{\mathscr S}
\newcommand{\RE}{\operatorname{Re}}
\numberwithin{equation}{section}          
\newtheorem{thm}{Theorem}
\numberwithin{thm}{section}
\newtheorem{prop}[thm]{Proposition}
\newtheorem{lemma}[thm]{Lemma}
\theoremstyle{definition}
\theoremstyle{remark}
\newtheorem{rem}[thm]{Remark}              
\title{Non-isometric translation and modulation 
invariant Hilbert spaces}
\author{P. K. Ratnakumar}
\address{Harish-Chandra Research Institute (HBNI), Chhatnag Road,
Jhunsi, Allahabad, 211019, Uttarpradesh, India}
\email{ratnapk@hri.res.in}
\author{Joachim Toft}
\address{Department of Mathematics,
Linn{\ae}us University, V{\"a}xj{\"o}, Sweden}
\email{joachim.toft@lnu.se}
\author{Jasson Vindas}
\address{Department of MathematicsDepartment of Mathematics: Analysis,
Logic and Discrete Mathematics,
Ghent University, Ghent, Belgium}
\email{jasson.vindas@ugent.be}
\begin{document}

\begin{abstract}
Let $\maclH$ be a Hilbert space, continuously
embedded in $\mascS '(\rr d)$, and
which contains at least one non-zero element
in $\mascS '(\rr d)$.
If there is a constant $C_0>0$ such that
$$
\nm {e^{i\scal \cdo \xi}f(\cdo -x)}{\maclH}\le C_0\nm f{\maclH},
\qquad f\in \maclH ,\ x,\xi \in \rr d,
$$
then we prove that $\maclH = L^2(\rr d)$, with equivalent norms.
\end{abstract}

\keywords{modulation spaces, Feichtinger's minimization principle}

\subjclass{46C15, 46C05, 42B35}

\maketitle

\section{Introduction}\label{sec0}

\par

In \cite{ToGuMaRa} it is proved that a suitable non-trivial
Hilbert space $\maclH \subseteq \mascS '(\rr d)$, which is
norm preserved under translations
\begin{align*}
f &\mapsto f(\cdo -x) 
\intertext{and modulations}
f &\mapsto e^{i\scal \cdo \xi }f 
\end{align*}
is equal to $L^2(\rr d)$. (See \cite{Ho1} or Section
\ref{sec1} for notations.) It is here also proved that the
norms between $\maclH$ and $L^2(\rr d)$ only differs
by a multiplicative constant, i.{\,}e. for some
constant $C>0$ one has
\begin{equation}\label{Eq:NormEquiv}
\nm f{\maclH}
=
C\nm f{L^2(\rr d)},
\qquad f\in \maclH =L^2(\rr d).
\end{equation}
This property is analogous to Feichtinger's minimization property,
%
%
which shows that the Feichtinger algebra $S_0(\rr d)$, which is the same as the
modulation space $M^{1,1}(\rr d)$, is the smallest non-trivial Banach space
of tempered distributions which is
norm invariant under translations and modulations.
(See e.{\,}g. \cite{Gc1} for general facts about modulation spaces.)

\par

The condition on non-triviality
in \cite{ToGuMaRa} is simply that $\maclH$ should
contain at least one non-trivial element in $M^{1,1}(\rr d)$.
Hence the main result in \cite{ToGuMaRa}
can be formulated as follows.

\par

\begin{thm}\label{Thm:InvHilbertSpaces0}
Let $\maclH$ be a Hilbert space which is
continuously embedded in $\mascS '(\rr d)$,
and norm preserved under translations and
modulations. If $\maclH$ contains a non-zero element
in $M^{1,1}(\rr d)$, then $\maclH = L^2(\rr d)$, and
\eqref{Eq:NormEquiv}
holds for some constant $C>0$ which is independent
of $f\in \maclH = L^2(\rr d)$.
\end{thm}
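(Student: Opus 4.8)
The plan is to recognise the two operations as the time-frequency shifts $\pi(x,\xi)=M_\xi T_x$, where $T_x f=f(\cdot-x)$ and $M_\xi f=e^{i\scal{\cdot}{\xi}}f$, and to treat them as a representation of the reduced Heisenberg group. Since each $\pi(x,\xi)$ is by hypothesis a norm-preserving linear bijection of $\maclH$ (with inverse $\pi(-x,-\xi)$ up to a phase), polarisation shows it is unitary; thus $(x,\xi)\mapsto\pi(x,\xi)$ is a projective unitary representation on $\maclH$, governed by the same cocycle $e^{i\scal x\xi}$ as on $L^2(\rr d)$. The very same operators act on $L^2(\rr d)$ as the Schr\"odinger representation, which is irreducible, and the inclusion $\maclH\hookrightarrow\mascS'(\rr d)$ is continuous, injective and intertwines the two actions. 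The subtlety to bear in mind from the outset is that norm-invariance gives no continuity in $(x,\xi)$ for free.

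The heart of the argument is to transport the Gabor reproducing formula into the unknown space $\maclH$ using the single vector $f_0\in M^{1,1}(\rr d)\cap\maclH$, normalised so that $\|f_0\|_{L^2}=1$. Because $f_0\in M^{1,1}=S_0$, the short-time Fourier transform $V_{f_0}g$ of any $g\in S_0$ lies in $L^1(\rr{2d})$, while the orbit has constant norm $\|\pi(x,\xi)f_0\|_{\maclH}=\|f_0\|_{\maclH}$; hence the vector-valued integral $\int_{\rr{2d}}V_{f_0}g(x,\xi)\,\pi(x,\xi)f_0\,dx\,d\xi$ converges absolutely as a Bochner integral in $\maclH$. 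Since the embedding into $\mascS'(\rr d)$ is continuous and injective, this integral must coincide with the value $(2\pi)^d g$ furnished by the classical inversion formula on $L^2$ (see \cite{Gc1}); this yields, for $f\in\maclH$ and $g\in S_0\cap\maclH$, the reproducing identity expressing $\langle f,g\rangle_{\maclH}$ as $(2\pi)^{-d}\int\overline{V_{f_0}g}\,\langle f,\pi(\cdot)f_0\rangle_{\maclH}$, and in particular the strong continuity and square-integrability of the $\maclH$-representation. Specialising $g=f_0$ and using covariance, the positive-definite coefficient $z\mapsto\langle f_0,\pi(z)f_0\rangle_{\maclH}$ solves the same twisted-convolution reproducing equation as its $L^2$ counterpart; under the Weyl correspondence this equation says that the associated positive operator is absorbed by the rank-one projection onto $f_0$, which forces it to be a scalar multiple of that projection. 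Hence the two diagonal matrix coefficients are proportional, $\langle f_0,\pi(z)f_0\rangle_{\maclH}=C\langle f_0,\pi(z)f_0\rangle_{L^2}$, and therefore $\langle\cdot,\cdot\rangle_{\maclH}=C\langle\cdot,\cdot\rangle_{L^2}$ throughout $S_0\cap\maclH$, together with the orthogonality relation $\|\langle f,\pi(\cdot)f_0\rangle_{\maclH}\|_{L^2(\rr{2d})}^2=C\,(2\pi)^d\|f\|_{\maclH}^2$.

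It remains to pass from the cyclic subspace generated by $f_0$ to all of $\maclH$, and to upgrade proportionality to an identity of spaces. Here I would invoke the uniqueness of the embedding: any bounded operator $L^2(\rr d)\to\mascS'(\rr d)$ commuting with all translations and all modulations is a multiple of the identity, since a convolutor whose multiplier is invariant under every frequency shift must be constant. Writing $\maclH$ as a direct sum of copies of the Schr\"odinger representation via Stone--von Neumann and applying this fact to the restriction of the injective embedding to each copy shows that the multiplicity must be one; thus $\maclH$ and $L^2(\rr d)$ coincide as sets of distributions and the embedding is a scalar multiple of the identity. The open mapping theorem then makes the two Hilbert norms equivalent, and finally Schur's lemma applied to the positive operator comparing the two $\pi$-invariant inner products on the irreducible $L^2(\rr d)$ forces that operator to be scalar, delivering \eqref{Eq:NormEquiv}.

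The step I expect to be the main obstacle is exactly the transfer of the reproducing formula into $\maclH$: a priori one controls only an isometric, possibly discontinuous action on an abstract space, and the whole difficulty is to convert it into a genuine, strongly continuous, square-integrable representation of the Heisenberg group to which the structure theory applies. The continuous injective embedding into $\mascS'(\rr d)$ and the single $M^{1,1}(\rr d)$-vector are precisely the two ingredients that bridge the unknown geometry of $\maclH$ to the concrete time-frequency analysis on $L^2(\rr d)$; once the Bochner integral is shown to live in $\maclH$ and to be identified through $\mascS'(\rr d)$, the positivity argument and the multiplicity count proceed comparatively mechanically.
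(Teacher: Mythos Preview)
The paper does not give its own proof of Theorem~\ref{Thm:InvHilbertSpaces0}: the statement is quoted verbatim as ``the main result in \cite{ToGuMaRa}'' and is used only as background for the later Theorems~\ref{Thm:InvHilbertSpaces1} and~\ref{Thm:InvHilbertSpaces2}. So there is nothing in the present paper to compare your argument against; any comparison has to be with \cite{ToGuMaRa} itself.

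Your plan is in the right spirit and, at the level of strategy, close to what \cite{ToGuMaRa} does: exploit a single window $f_0\in M^{1,1}\cap\maclH$, push the Gabor reproducing machinery from $L^2$ into $\maclH$ via the continuous embedding $\maclH\hookrightarrow\mascS'(\rr d)$, and then read off $\maclH=L^2$ with proportional norm. The representation-theoretic packaging (Stone--von~Neumann, Schur) is a reasonable way to organise the endgame.

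The genuine gap is exactly the one you flag and then do not close: you need $(x,\xi)\mapsto\pi(x,\xi)f_0$ to be Bochner-integrable in $\maclH$, but norm invariance gives neither strong continuity nor even strong measurability of this map into $\maclH$, and you have no a~priori separability of $\maclH$. Without this, the vector-valued integral is not defined in $\maclH$, the ``reproducing identity in $\maclH$'' is not available, and the whole chain (square-integrability of the representation, the twisted-convolution equation for the $\maclH$-matrix coefficient, the Schur/multiplicity argument) collapses. The way \cite{ToGuMaRa} gets around this is not to use the continuous inversion formula at all, but to work with \emph{discrete} Gabor expansions: since $f_0\in M^{1,1}$, one has a Gabor frame $\{\pi(\lambda)f_0\}_{\lambda\in\Lambda}$ with a dual window also in $M^{1,1}$, and the relevant sums over the countable lattice $\Lambda$ converge absolutely in $\maclH$ by norm invariance plus $\ell^1$-coefficient control. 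That replaces your Bochner step by elementary series manipulations and makes the argument go through without any continuity hypothesis on the representation. If you want to keep your continuous/representation-theoretic route, you must supply an independent argument that the orbit map is at least weakly $\maclH$-measurable with separable range; as written, that step is missing.
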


\par

An alternative approach, using the Bargmann transform,
to reach similar properties is given by Bais, Pinlodi and
Venku Naidu in \cite{BaMoVe}.
In fact, by using their result \cite[Theorem 3.1]{BaMoVe} in combination
with some well-known arguments in the distribution theory, one obtains
the following improvement of Theorem \ref{Thm:InvHilbertSpaces0}.

\par

\begin{thm}\label{Thm:InvHilbertSpaces1}
Let $\maclH$ be a Hilbert space which is
continuously embedded in $\mascS '(\rr d)$,
and norm preserved under translations and
modulations. If $\maclH$ is non-trivial,
then $\maclH = L^2(\rr d)$, and
\eqref{Eq:NormEquiv}
holds for some constant $C>0$ which is independent
of $f\in \maclH = L^2(\rr d)$.
\end{thm}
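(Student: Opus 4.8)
The plan is to reduce Theorem~\ref{Thm:InvHilbertSpaces1} to Theorem~\ref{Thm:InvHilbertSpaces0}: from an arbitrary non-zero $f\in \maclH$ I will manufacture a non-zero element of $\maclH$ which in addition lies in $M^{1,1}(\rr d)$ --- in fact in $\mascS (\rr d)$ --- and then Theorem~\ref{Thm:InvHilbertSpaces0} (equivalently \cite[Theorem~3.1]{BaMoVe}) applies verbatim. The mechanism is that multiplication by a Schwartz function and convolution by a Schwartz function are superpositions of the modulations $M_\xi f=e^{i\scal \cdo \xi}f$ and the translations $T_xf=f(\cdo -x)$, respectively, under which $\maclH$ is norm invariant, while the composition of these two operations carries $\mascS '(\rr d)$ into $\mascS (\rr d)$.

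First I would set up the superposition principle as a weak integral. Since $\maclH \hookrightarrow \mascS '(\rr d)$ is continuous and injective, each $\phi \in \mascS (\rr d)$ is represented through Riesz' theorem by a vector $g_\phi\in \maclH$ with $\scal h\phi =(h,g_\phi)_{\maclH}$ for all $h\in \maclH$, and $\{g_\phi:\phi\in \mascS (\rr d)\}$ is total in $\maclH$. By hypothesis the orbits $\xi\mapsto M_\xi f$ and $x\mapsto T_xf$ have constant $\maclH$-norm $\nm f{\maclH}$, and they are weakly continuous into $\maclH$: on the total family $\{g_\phi\}$ the relevant pairing reduces to the manifestly continuous $\mascS '$--$\mascS$ pairing (e.g. $\scal{M_\xi f}\phi$), and weak continuity then propagates to all of $\maclH$ since the orbit is norm bounded. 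Consequently, for $b\in L^1(\rr d)$ the functional $g\mapsto \int b(\xi)\,(M_\xi f,g)_{\maclH}\,d\xi$ is bounded by $\nm b{L^1}\nm f{\maclH}$, so Riesz' theorem yields a vector we denote $\int b(\xi)M_\xi f\,d\xi\in \maclH$; the same applies to $\int b(x)T_xf\,dx$. Pairing the result against any $\phi$ and unwinding the Riesz identification shows that its embedded image in $\mascS '(\rr d)$ is exactly the distributional superposition.

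I would then specialize the weight twice. Writing $\psi_1\in C_c^\infty(\rr d)$ as $\psi_1(y)=\int c_1(\xi)e^{i\scal y\xi}\,d\xi$, with $c_1$ its inverse Fourier transform (Schwartz, hence $L^1$), the modulation superposition $\int c_1(\xi)M_\xi f\,d\xi$ realizes the product $\psi_1 f\in \maclH$; likewise, for $\psi_2\in \mascS (\rr d)$ the translation superposition $\int \psi_2(x)T_xf\,dx$ realizes the convolution $\psi_2*f$. Applying the first with $\psi_1\in C_c^\infty(\rr d)$ and then the second gives $F=\psi_2*(\psi_1 f)\in \maclH$. Since $\psi_1 f$ is a compactly supported distribution of finite order, the classical fact that its convolution with a Schwartz function is again Schwartz yields $F\in \mascS (\rr d)\subseteq M^{1,1}(\rr d)$. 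To ensure $F\ne 0$, choose $\psi_1$ with $\psi_1 f\ne 0$ (possible because $f\ne 0$) and let $\psi_2$ run through an approximate identity, so that $\psi_2*(\psi_1 f)\to \psi_1 f\ne 0$ in $\mascS '(\rr d)$ and hence is non-zero for some $\psi_2$. Theorem~\ref{Thm:InvHilbertSpaces0}, applied to $\maclH$ which now contains the non-zero element $F\in M^{1,1}(\rr d)$, gives $\maclH =L^2(\rr d)$ together with \eqref{Eq:NormEquiv}.

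The step I expect to be delicate is the passage in the second paragraph from bare norm invariance to a genuine $\maclH$-valued integral: norm preservation does not by itself make the orbits strongly continuous, so strong measurability --- and thus a Bochner integral --- is not available for free. The remedy is to work with the weak (Pettis) integral and to bootstrap the cheap pointwise weak continuity coming from the $\mascS '$--$\mascS$ pairing on the total set $\{g_\phi\}$ up to $\maclH$-valued integrability, using only the uniform bound $\nm{M_\xi f}{\maclH}=\nm{T_xf}{\maclH}=\nm f{\maclH}$. Once the integral is in hand, the remaining steps are routine distribution theory.
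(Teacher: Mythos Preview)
Your argument is sound: the weak-integral regularization does produce a non-zero element of $\maclH\cap\mascS(\rr d)$, and Theorem~\ref{Thm:InvHilbertSpaces0} then finishes the job. The point you flag as delicate --- obtaining an $\maclH$-valued integral from mere norm invariance --- is handled correctly: weak continuity of the orbit on the total set $\{g_\phi\}$ together with the uniform norm bound propagates to weak continuity on all of $\maclH$, hence a Pettis integral exists, and its identification with $\psi_1 f$ (respectively $\psi_2*(\psi_1 f)$) in $\mascS'(\rr d)$ is routine distribution theory.

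This is, however, a different route from the paper's. The paper does not go through Theorem~\ref{Thm:InvHilbertSpaces0} at all; it reduces directly to \cite[Theorem~3.1]{BaMoVe}, whose only extra hypothesis is the Gaussian estimate \eqref{Eq:TypeCondGaussian}. Lemma~\ref{Lemma:NonGaussAssump} supplies that estimate in a couple of lines using nothing but the continuous embedding $\maclH\hookrightarrow\mascS'(\rr d)$ and the description of $\mascS'(\rr d)$ as the inductive limit of the harmonic-oscillator Sobolev spaces $H_d^j(L^2(\rr d))$; the translation and modulation invariance are not used at that step. So the paper's argument is shorter and exploits less of the structure, whereas yours actually exercises the invariance to build the regularization and stays within the framework of \cite{ToGuMaRa}. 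One small remark: Theorem~\ref{Thm:InvHilbertSpaces0} and \cite[Theorem~3.1]{BaMoVe} carry different auxiliary hypotheses (a non-zero $M^{1,1}$ element versus the Gaussian test estimate), so your parenthetical ``equivalently'' should be read loosely --- both happen to apply once your $F$ is in hand, but they are not the same statement.
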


\par

We observe that the condition

\begin{equation}\label{Eq:TypeCondGaussian}
%
%
|(f,\phi )|\le C\nm f{\maclH},
\qquad
f\in \maclH ,\ \phi (x)= e^{-\frac 12|x|^2},
\end{equation}
for some constant $C>0$ in the hypothesis in
\cite[Theorem 3.1]{BaMoVe} is absent in Theorem
\ref{Thm:InvHilbertSpaces1}. Hence Theorem
\ref{Thm:InvHilbertSpaces1} is slightly more general
than \cite[Theorem 3.1]{BaMoVe}.

\par

Here we remark that we may relax the hypothesis in Theorem \ref{Thm:InvHilbertSpaces1}
by assuming that $\maclH$ is continuously embedded in the space $\mascD '(\rr d)$
instead of the smaller space $\mascS '(\rr d)$.
In fact, let $\maclB$ be a translation invariant Banach space
which is continuously embedded in $\mascD '(\rr d)$ and satisfies
$$
\nm {f(\cdo -x)}{\maclB}\le C\nm f{\maclB},
$$ 
for some constant $C\ge 1$ which is independent of $f\in \maclB$
and $x\in \rr d$. Then it follows by some standard arguments that
$\maclB$ is continuously embedded in $\mascS '(\rr d)$
(see e.{\,}g. \cite[Proposition 1.5]{ToGuMaRa}).


In the paper we investigate properties on weaker forms of translation and modulation
invariant Hilbert spaces compared to \cite{BaMoVe,ToGuMaRa}. More precisely we
show that except for the norm identity \eqref{Eq:NormEquiv},
Theorem \ref{Thm:InvHilbertSpaces1} still holds true after the
condition that $\maclH$ is norm preserved under translations and modulations,
is relaxed into the weaker condition
\begin{equation}\label{Eq:ContTransMod}
\nm {f(\cdo -x)e^{i\scal \cdo \xi}}{\maclH} \le C_0\nm f{\maclH},
\qquad f\in \maclH ,\ x,\xi \in \rr d.
\end{equation}
In our extension, the identity \eqref{Eq:NormEquiv} should be replaced by
the norm equivalence
\begin{equation}\label{Eq:NormEquiv2}
C_0^{-1}C\nm f{L^2(\rr d)}
\le
\nm f{\maclH}
\le
C_0C\nm f{L^2(\rr d)},
\qquad f\in \maclH =L^2(\rr d).
\end{equation}

\par

More precisely our extension of Theorem \ref{Thm:InvHilbertSpaces1} is
the following.

\par

\begin{thm}\label{Thm:InvHilbertSpaces2}
Let $\maclH$ be a Hilbert space which is
continuously embedded in $\mascS '(\rr d)$,
and such that \eqref{Eq:ContTransMod} holds true for some constant
$C_0\ge 1$ which is independent of $f\in \maclH$ and $x,\xi \in \rr d$.
If $\maclH$ is non-trivial, then $\maclH = L^2(\rr d)$, and
\eqref{Eq:NormEquiv2} holds
for some constant $C>0$ which is independent
of $f\in \maclH = L^2(\rr d)$.
\end{thm}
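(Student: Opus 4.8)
The plan is to reduce Theorem \ref{Thm:InvHilbertSpaces2} to the isometric case, Theorem \ref{Thm:InvHilbertSpaces1}, by replacing the given inner product on $\maclH$ with an \emph{equivalent} one that is genuinely invariant under all time-frequency shifts. For $x,\xi \in \rr d$ write $\pi _{x,\xi}f=e^{i\scal \cdo \xi}f(\cdo -x)$. The composition rule $\pi _{x_1,\xi _1}\pi _{x_2,\xi _2}=e^{-i\scal {x_1}{\xi _2}}\pi _{x_1+x_2,\xi _1+\xi _2}$ shows that each $\pi _{x,\xi}$ is invertible with $\pi _{x,\xi}^{-1}=e^{-i\scal x\xi}\pi _{-x,-\xi}$. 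Applying \eqref{Eq:ContTransMod} to both $\pi _{x,\xi}$ and its inverse, and using that the phase factors have modulus one, I first record the two-sided estimate
\begin{equation*}
C_0^{-1}\nm f{\maclH}\le \nm {\pi _{x,\xi}f}{\maclH}\le C_0\nm f{\maclH},
\qquad f\in \maclH ,\ x,\xi \in \rr d,
\end{equation*}
so that every $\pi _{x,\xi}$ is a bounded bijection of $\maclH$.

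Next I would average the inner product over the group $\rr {2d}$ of time-frequency shifts. Since $\rr {2d}$ is abelian, hence amenable, it carries a translation invariant mean $M$ on the space of all bounded functions on $\rr {2d}$; for fixed $f,g\in \maclH$ the function $(x,\xi )\mapsto \scal {\pi _{x,\xi}f}{\pi _{x,\xi}g}_{\maclH}$ is bounded by $C_0^2\nm f{\maclH}\nm g{\maclH}$ by the estimate above, so I may set
\begin{equation*}
\scal fg_{\maclH}^{\mathrm{new}}
=
M_{(x,\xi )}\big [\scal {\pi _{x,\xi}f}{\pi _{x,\xi}g}_{\maclH}\big ].
\end{equation*}
Linearity and positivity of $M$ make this a sesquilinear Hermitian form, and the lower bound above gives $\scal ff_{\maclH}^{\mathrm{new}}\ge C_0^{-2}\nm f{\maclH}^2$, so it is an inner product whose norm satisfies $C_0^{-1}\nm f{\maclH}\le \nm f{\maclH}^{\mathrm{new}}\le C_0\nm f{\maclH}$. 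Thus $\nm \cdot {\maclH}^{\mathrm{new}}$ is complete and equivalent to $\nm \cdot {\maclH}$, and $(\maclH ,\scal \cdot \cdot _{\maclH}^{\mathrm{new}})$ is again a non-trivial Hilbert space continuously embedded in $\mascS '(\rr d)$.

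The point of the averaging is that the new norm is exactly invariant: by the composition rule $\nm {\pi _{x,\xi}\pi _{x_0,\xi _0}f}{\maclH}=\nm {\pi _{x+x_0,\xi +\xi _0}f}{\maclH}$, since the cocycle phase has modulus one, and then the translation invariance of $M$ in $(x,\xi )$ yields $\nm {\pi _{x_0,\xi _0}f}{\maclH}^{\mathrm{new}}=\nm f{\maclH}^{\mathrm{new}}$ for all $x_0,\xi _0$. In particular the renormalized space is norm preserved under translations and modulations, so Theorem \ref{Thm:InvHilbertSpaces1} applies and gives $\maclH =L^2(\rr d)$ together with $\nm f{\maclH}^{\mathrm{new}}=C\nm f{L^2(\rr d)}$ for some $C>0$. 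Feeding this identity into the equivalence $C_0^{-1}\nm f{\maclH}\le \nm f{\maclH}^{\mathrm{new}}\le C_0\nm f{\maclH}$ produces $C_0^{-1}C\nm f{L^2(\rr d)}\le \nm f{\maclH}\le C_0C\nm f{L^2(\rr d)}$, which is \eqref{Eq:NormEquiv2}.

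I expect the only genuine obstacle to be the averaging step: the group $\rr {2d}$ is non-compact, so one cannot integrate against a finite Haar measure and must instead exploit amenability through an invariant mean. Treating $\rr {2d}$ as a discrete group makes the mean available on all bounded functions and sidesteps any measurability or strong-continuity requirement on $(x,\xi )\mapsto \pi _{x,\xi}f$; everything else is the routine verification that an averaged positive form remains a comparable inner product.
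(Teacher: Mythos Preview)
Your proof is correct and shares the paper's overall strategy: replace the given inner product by an equivalent one that is exactly invariant under all time-frequency shifts, then invoke Theorem \ref{Thm:InvHilbertSpaces1}. The difference lies only in how the averaging is carried out. The paper (Lemma \ref{Lemma:ConvEquivHilbNorm}) works by hand with F{\o}lner averages
\[
\nm f{[R]}^2 = \frac{1}{|Q_R|}\iint_{Q_R}\nm{e^{i\scal \cdo \xi}f(\cdo -x)}{\maclH}^2\,dx\,d\xi
\]
over cubes $Q_R=[-R,R]^{2d}$, extracts a subsequence $R=n_k$ along which these averages converge on a countable dense set via Cantor diagonalization, extends by density, and then checks invariance of the limit by showing that the boundary error is $O(1/R)$. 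You instead appeal to amenability of $\rr{2d}$ (viewed as a discrete abelian group) to obtain a translation-invariant mean on all bounded functions in one stroke, which also lets you bypass any continuity or measurability of $(x,\xi)\mapsto \pi_{x,\xi}f$. Your route is shorter and cleaner but nonconstructive (it ultimately rests on Hahn--Banach); the paper's route is elementary and self-contained, and is in effect a bare-hands construction of precisely such an invariant mean restricted to the functions $(x,\xi)\mapsto \nm{\pi_{x,\xi}f}{\maclH}^2$. Both arguments produce the same constants $C_0^{\pm 1}$ in \eqref{Eq:NormEquiv2}.
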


\par

The key step for the proof of Theorem
\ref{Thm:InvHilbertSpaces2} is to
find an equivalent Hilbert norm
to $\nm \cdo{\maclH}$, which is
norm preserved under translations
and modulations. The result
then follows from Theorem 
\ref{Thm:InvHilbertSpaces1}. 
As a first idea one may try to use
the equivalent norm
$$
\nm f{\maclB}\equiv \sup _{x,\xi \in \rr d}
\left (
\nm {f(\cdo -x)e^{i\scal \cdo \xi}}{\maclH}
\right ).
$$
A straight-forward control shows that
this norm is invariant under translations
and modulations. On the other hand,
it seems $\nm f{\maclB}$ might fail to be
a Hilbert norm, and thereby not being
suitable for applying Theorem
\ref{Thm:InvHilbertSpaces1}.

\par

In our approach to find the sought Hilbert
norm, we use some ideas in the
construction of $\nm \cdo{\maclB}$
above, but replace the supremum with
mean-values of the form
$$
\nm f{[R]}^2
\equiv
(2R)^{-2d}\iint _{\rr {2d}}
\nm {f(\cdo -x)e^{i\scal \cdo \xi}}{\maclH}^2
\, dxd\xi ,
$$
when $R>0$. It follows that each
$\nm \cdo{[R]}$ is a Hilbert norm
which is uniformly equivalent with
$\nm \cdo{\maclH}$. On the other hand,
none of $\nm \cdo{[R]}$ need to be
translation nor modulation invariant.
However, by increasing $R$, it follows that
$\nm \cdo{[R]}$
becomes, in some sense, closer to being
translation and modulation invariant.
From suitable limit process, letting $R$
tending to infinity, we are able to
extract a sought equivalent Hilbert norm
which is norm preserved under translations
and modulations. (See Lemma
\ref{Lemma:ConvEquivHilbNorm}.)

\par

In Section \ref{sec3} we also present some improvements of this result,
where it is assumed that $\maclH$ in Theorem \ref{Thm:InvHilbertSpaces2}
is embedded in suitable larger (ultra-)distribution spaces which
contain $\mascS '(\rr d)$ (see Theorem \ref{Thm:InvHilbertSpaces3}).
These investigations are based on some
general properties of translation and modulation invariant quasi-Banach spaces
(see Proposition \ref{Prop:TransModInvQBanachSp}).

\par

The proofs of Theorem \ref{Thm:InvHilbertSpaces2} and its extensions,
are, among others, based on the fact that the 
involved Hilbert spaces
are separable.
In Section
\ref{sec2} we verify such facts, by using the Bargmann transform to
transfer the questions to Hilbert spaces
of \emph{entire functions}. In the end we conclude that any Hilbert space
which are continuously embedded in $\mascS '(\rr d)$ (or in some even
larger distribution spaces), must be separable. (See Propositions
\ref{Prop:Separability} and \ref{Prop:Separability2}.)
%
%
%

\par

\section*{Acknowledgement}
The first and second authors are grateful to HRI and the
infosys foundations for support for the second
 author during his visit at HRI in December 2023, making this collaboration
possible. The second 
 author is also grateful for the excellent hospitality
during his visit at HRI in December 2023.
The second 
 author was supported by Vetenskapsr{\aa}det (Swedish
Science Council) within the project 2019-04890.
The third author was supported by the Research
Foundation–Flanders through the FWO-grant number G067621N.

%
%
%
%

\par

%

\section{Proof of the main result}\label{sec1}

\par

In this section we prove our main result Theorem
\ref{Thm:InvHilbertSpaces2}. First we show how the condition
\eqref{Eq:TypeCondGaussian} in \cite[Theorem 3.1]{BaMoVe} can be
removed, which leads to Theorem \ref{Thm:InvHilbertSpaces1}. Then
we prove a lemma, which in combination with Theorem \ref{Thm:InvHilbertSpaces1},
essentially leads to Theorem \ref{Thm:InvHilbertSpaces2}.

\par

First we have the following lemma.
%
%

\par

\begin{lemma}\label{Lemma:NonGaussAssump}
Suppose that $\maclH$ 
is continuously embedded in $\mascS' (\rr d)$.
Then there is a constant $C>0$ such that \eqref{Eq:TypeCondGaussian}
holds.
\end{lemma}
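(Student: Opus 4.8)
The plan is to notice that the Gaussian $\phi(x)=e^{-\frac 12|x|^2}$ is itself a Schwartz function, so that \eqref{Eq:TypeCondGaussian} is nothing but the assertion that evaluation against this one fixed test function is a bounded linear functional on $\maclH$. That, in turn, is precisely the information carried by the standing assumption that the inclusion $\maclH \hookrightarrow \mascS '(\rr d)$ is continuous. In other words, I expect the lemma to be a soft fact from distribution theory, not requiring the covariance estimate or the non-triviality at all.

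In detail, I would argue as follows. The seminorms $T\mapsto |(T,\psi )|$, $\psi \in \mascS (\rr d)$, generate the weak-$*$ topology on $\mascS '(\rr d)$ and are continuous for the strong topology as well; hence, whatever standard topology $\mascS '(\rr d)$ carries, continuity of the inclusion $\maclH \hookrightarrow \mascS '(\rr d)$ forces each map $f\mapsto (f,\psi )$ to be continuous on $\maclH$. Since $\maclH$ is a Hilbert space, every continuous linear functional is bounded, so for each fixed $\psi \in \mascS (\rr d)$ there is a constant $C_\psi >0$ with $|(f,\psi )|\le C_\psi \nm f{\maclH}$ for all $f\in \maclH$. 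Specialising to $\psi =\phi$, the Gaussian, yields \eqref{Eq:TypeCondGaussian} with $C=C_\phi$.

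The point worth stressing is that there is essentially no obstacle here: neither the non-triviality of $\maclH$ nor the covariance bound \eqref{Eq:ContTransMod} enters the argument for \eqref{Eq:TypeCondGaussian}. These hypotheses are recorded only because the lemma is stated under the standing assumptions of Theorem \ref{Thm:InvHilbertSpaces2}; their genuine work is reserved for the subsequent step, where \eqref{Eq:TypeCondGaussian} is combined with Theorem \ref{Thm:InvHilbertSpaces1}, after replacing $\nm \cdo {\maclH}$ by an equivalent, honestly translation- and modulation-invariant Hilbert norm, to obtain Theorem \ref{Thm:InvHilbertSpaces2}. The only thing one must verify for the lemma itself is that the distributional pairing $(f,\phi )$ is compatible with the embedding, which is immediate from the description of the topology on $\mascS '(\rr d)$ given above.
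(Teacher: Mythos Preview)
Your argument is correct and matches the paper's approach: both exploit that the continuous embedding $\maclH \hookrightarrow \mascS'(\rr d)$ makes $f\mapsto (f,\phi)$ a bounded linear functional on $\maclH$. The paper phrases this via the inductive-limit description $\mascS'(\rr d)=\bigcup_j H_d^j(L^2(\rr d))$ with $H_d=|x|^2-\Delta_d$, so that $\maclH$ embeds continuously in some fixed $\maclH_j$ and $|(f,\phi)|\lesssim \nm f{\maclH_j}\nm{H_d^j\phi}{L^2}\le C\nm f{\maclH}$, but this is just a concrete rendering of the same continuity fact you invoke directly.
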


\par

Lemma \ref{Lemma:NonGaussAssump} follows by a straight-forward
continuity argument. (See e.{\,}g.
the arguments in \cite[p.35]{Rud}.)
In order to assist the reader we present a
proof in Section \ref{sec2}
for a more general result (see Lemma
\ref{Lemma:NonGaussAssump2} and its proof).

\par

\begin{proof}[Proof of Theorem \ref{Thm:InvHilbertSpaces1}]
The result follows by combining \cite[Theorem 3.1]{BaMoVe}
with Lemma \ref{Lemma:NonGaussAssump}.
The details are left for the reader.
\end{proof}

\par

The proof of Theorem \ref{Thm:InvHilbertSpaces2} is based on the
fact that the involved Hilbert spaces are separable. This fact is
guaranteed by the following proposition.

\par

\begin{prop}\label{Prop:Separability}
Suppose that the Hilbert space $\maclH$ 
is continuously embedded in $\mascS' (\rr d)$.
Then $\maclH$ is separable.
\end{prop}

\par

We observe that neither translation nor modulation
invariant hypotheses are imposed on the Hilbert spaces
in Proposition \ref{Prop:Separability}.

\par

It is expected that Proposition \ref{Prop:Separability}
is available in the literature. For completeness
we give a proof of a generalized result
in Section \ref{sec2} (see Proposition \ref{Prop:Separability2}
and its proof).

\par

\begin{lemma}\label{Lemma:ConvEquivHilbNorm}
Suppose that $\maclH$ satisfies the hypothesis in
Theorem \ref{Thm:InvHilbertSpaces2}.
%
%
%
Then there is a norm $\nmm \cdo$ on $\maclH$ with the following properties:
\begin{enumerate}
\item $\nmm \cdo$ is equivalent to $\nm \cdo{\maclH}$;

\vrum

\item $\nmm \cdo$ is a Hilbert norm;

\vrum

\item $\nmm {e^{i\scal \cdo \xi}f(\cdo -x)}= \nmm f$ for every $f\in \maclH$
and $x,\xi \in \rr d$.
\end{enumerate}
\end{lemma}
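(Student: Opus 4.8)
The plan is to manufacture the invariant Hilbert norm by averaging the given inner product over the group of time-frequency shifts with an invariant mean. Write $U(x,\xi)f = e^{i\scal \cdo \xi}f(\cdo -x)$ for the time-frequency shift. A direct computation gives the composition law $U(x_1,\xi_1)U(x_2,\xi_2) = e^{-i\scal {x_1}{\xi_2}}U(x_1+x_2,\xi_1+\xi_2)$, so that $(x,\xi)\mapsto U(x,\xi)$ is a projective representation of the abelian group $\rr{2d}$ whose cocycle is a unimodular scalar. By the hypothesis \eqref{Eq:ContTransMod} each $U(x,\xi)$ maps $\maclH$ into $\maclH$ with $\nm {U(x,\xi)f}{\maclH}\le C_0\nm f{\maclH}$; since $U(x,\xi)^{-1}$ is a unimodular multiple of $U(-x,-\xi)$, applying the same bound to the inverse yields the two-sided estimate $C_0^{-1}\nm f{\maclH}\le \nm {U(x,\xi)f}{\maclH}\le C_0\nm f{\maclH}$ for all $f\in \maclH$ and $x,\xi\in \rr d$.

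First I would fix an invariant mean $M$ on $\ell^\infty(\rr{2d})$, regarding $\rr{2d}$ as a (discrete) abelian, hence amenable, group; thus $M$ is a normalized, positive, translation-invariant linear functional, extended to complex-valued bounded functions through its real and imaginary parts. For $f,g\in \maclH$ the function $\Phi_{f,g}(x,\xi) = (U(x,\xi)f,U(x,\xi)g)_{\maclH}$ is bounded, by Cauchy--Schwarz and the uniform bound above, so I may set
\[
(f,g)_\sim = M\big[ (x,\xi)\mapsto \Phi_{f,g}(x,\xi)\big],
\qquad
\nmm f = (f,f)_\sim^{1/2}.
\]
Linearity of $M$ together with the sesquilinearity and the conjugate symmetry of $\Phi_{f,g}$ in $(f,g)$ show that $(\cdo ,\cdo )_\sim$ is a conjugate-symmetric sesquilinear form, which is the algebraic part of (2).

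Next I would verify the three assertions. For (1), monotonicity and normalization of $M$ convert the pointwise two-sided estimate $C_0^{-2}\nm f{\maclH}^2\le \Phi_{f,f}(x,\xi)\le C_0^2\nm f{\maclH}^2$ into $C_0^{-2}\nm f{\maclH}^2\le \nmm f^2\le C_0^2\nm f{\maclH}^2$; this simultaneously shows that $\nmm \cdo$ is equivalent to $\nm \cdo{\maclH}$ and that $(\cdo ,\cdo )_\sim$ is positive definite, thereby completing (2). For the invariance (3) I would use the projective composition law: since $U(x,\xi)U(x_0,\xi_0)$ is a unimodular multiple of $U(x+x_0,\xi +\xi_0)$, the scalar cancels between the two slots of the inner product, so that $\Phi_{U(x_0,\xi_0)f,\,U(x_0,\xi_0)g}(x,\xi)=\Phi_{f,g}(x+x_0,\xi +\xi_0)$. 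Applying the translation invariance of $M$ then gives $(U(x_0,\xi_0)f,U(x_0,\xi_0)g)_\sim = (f,g)_\sim$, which is precisely (3).

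The conceptual heart, and the step I would be most careful with, is the interplay between the cocycle and the mean: one must check that the unimodular phases genuinely cancel in the inner product, so that the failure of $U$ to be an honest homomorphism is harmless, and that the invariance of $M$ is invoked in the form that matches the group translation. Existence of the invariant mean is the only soft analytic ingredient, and it is automatic here since $\rr{2d}$ is abelian; working with a mean on $\ell^\infty(\rr{2d})$ for the discretized group sidesteps any measurability or strong-continuity requirement on $(x,\xi)\mapsto U(x,\xi)$, which is not assumed.
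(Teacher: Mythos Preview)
Your argument is correct and is in fact the classical Dixmier--Day renorming trick: average the inner product against an invariant mean on the amenable group $\rr{2d}$, using that the unimodular cocycle of the projective representation cancels in $\Phi_{f,g}$. All three conclusions follow exactly as you indicate.

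The paper takes a different, more hands-on route. Instead of invoking an abstract invariant mean, it performs a F{\o}lner-type averaging: it sets
\[
\nm f{[R]}^2 = \frac{1}{|Q_R|}\iint_{Q_R}\nm{U(x,\xi)f}{\maclH}^2\,dx\,d\xi
\]
over growing cubes $Q_R$, extracts via Cantor diagonalization a subsequence $n_k$ along which $\nm{f}{[n_k]}$ converges for a countable dense set, extends by density, recovers the scalar product through polarization, and proves invariance by showing that the boundary error $E_R(f)$ incurred when shifting the cube by a fixed $(x_0,\xi_0)$ is $O(1/R)$. In essence the paper is reproving amenability of $\rr{2d}$ in situ. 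Your approach is shorter and conceptually cleaner, and by working with a mean on $\ell^\infty(\rr{2d})$ for the \emph{discrete} group you sidestep the measurability of $(x,\xi)\mapsto \nm{U(x,\xi)f}{\maclH}$, which the paper's integral formulation tacitly uses but never verifies. The price you pay is nonconstructivity: existence of the invariant mean rests on Hahn--Banach, whereas the paper's F{\o}lner averages are explicit up to the choice of subsequence.
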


\par

\begin{proof}
%
%
Let $Q_R = [-R,R]^{2d}$ be the cube in $\rr {2d}$ with center at origin and
side length $2R$, $R\ge 1$. Then the volume of $Q_R$ is given by
$|Q_R|=(2R)^{2d}$. We define the norm $\nm \cdo{[R]}$ by the formula
$$
\nm f{[R]} ^2
=
\frac 1{|Q_R|}\iint _{Q_R} \nm {e^{i\scal \cdo \xi}f(\cdo -x)}{\maclH}^2\, dxd\xi ,
\qquad f\in \maclH .
$$
By the definition it follows that $\nm \cdo {[R]}$ is equivalent to
$\nm \cdo {\maclH}$, and \eqref{Eq:ContTransMod} gives
\begin{equation}\label{Eq:NormEquiv1}
C_0^{-1}\nm f{\maclH} \le \nm f{[R]}
\le
C_0\nm f{\maclH},
\qquad f\in \maclH .
\end{equation}
Furthermore, $\nm \cdo{[R]}$ is a norm which arises from the scalar product
\begin{equation*}
(f,g)_{[R]} 
=
\frac 1{|Q_R|}\iint _{Q_R}
({e^{i\scal \cdo \xi}f(\cdo -x)},{e^{i\scal \cdo \xi}g(\cdo -x)})_{\maclH}
\, dxd\xi ,
\quad
f,g\in \maclH .
\end{equation*}

\par

Now let $\mho _0$ be a dense countable subset
of $\maclH$ containing $0$, and let $\mho$ be the smallest set
which contains $\mho _0$, and is closed under multiplications
by $\pm i$, additions and subtractions.
%
Note that $\mho _0$ and $\mho$ exist, because
$\maclH$ is separable, due to Proposition \ref{Prop:Separability}.
Then $\mho$ is countable.
Let $\mho = \{ f_j \} _{j=1}^\infty$
be a counting of $\mho$. Since
$$
C_0^{-1}\nm {f_1}{\maclH} \le \nm {f_1}{[m]}\le C_0\nm {f_1}{\maclH},
$$
for every $m\in \mathbf N$, there is a subsequence
$\{ m_k \} _{k=1}^\infty$ of $\mathbf N$
such that
$$
\nmm {f_1} \equiv \lim _{k\to \infty}  \nm {f_1}{[m _k]}
$$
exists. By Cantor's diagonalization principle,
there is a subsequence $\{ n_k \} _{k=1}^\infty$ of  $\{ m_k \} _{k=1}^\infty$
such that
$$
\nmm {f_j} \equiv \lim _{k\to \infty}  \nm {f_j}{[n_k]}
$$
exists for every $f_j\in \mho$, and by \eqref{Eq:NormEquiv1}
we get
\begin{equation}\label{Eq:NormEquiv3}
C_0^{-1}\nm f{\maclH} \le \nmm f
\le
C_0\nm f{\maclH},
\end{equation}
when $f\in \mho$.

\par

Next suppose that $f\in \maclH$ is arbitrary,
and let $\{ f_{0,j}\} _{j=1}^\infty
\subseteq \mho _0$, be chosen such that
$$
\lim _{j\to \infty} \nm {f-f_{0,j}}{\maclH} =0.
$$
Then
$$
\nmm {f_{0,j}-f_{0,k}} \le C_0\nm {f_{0,j}-f_{0,k}}{\maclH} \to 0,
\quad \text{as}\quad
j,k\to \infty .
$$
Since $\left |  \nmm {f_{0,j}}-\nmm {f_{0,k}}  \right | \le \nmm {f_{0,j}-f_{0,k}}$, it follows
that $\{ \nmm {f_{0,j}} \} _{j=1}^\infty$ is a Cauchy sequence in $\mathbf R$. Hence
$$
\nmm f \equiv \lim _{j\to \infty} \nmm {f_{0,j}}
$$
exists and is independent of the chosen particular sequence $\{ f_{0,j}\} _{j=1}^\infty$.
Since \eqref{Eq:NormEquiv3} holds for any $f_{0,j}\in \mho$, it follows from
the recent estimates and limit properties that \eqref{Eq:NormEquiv3}
extends to any $f\in \maclH$. This gives (1).

\par

%
%

\par

For $f,g\in \maclH$, their scalar product $(f,g)_{[n_k]}$ can be evaluated by
$$
(f,g)_{[n_k]} =\frac 14 \left (
\nm {f+g}{[n_k]}^2 - \nm {f-g}{[n_k]}^2 + i\nm {f+ig}{[n_k]}^2
- i\nm {f-ig}{[n_k]}^2.
\right )
$$
By letting $k$ tends to $\infty$, it follows that $\nmm \cdo$ is a Hilbert
norm with scalar product
$$
(f,g) \equiv \frac 14 \left (
\nmm {f+g}^2 - \nmm {f-g}^2 + i\nmm {f+ig}^2
- i\nmm {f-ig}^2 \right ),
$$
giving that $\nmm \cdo$ fulfills (2).

\par

It remains to prove that (3) holds. By repetition it suffices to prove
\begin{equation}\label{Eq:TranslModNormInv}
\nmm {e^{i\scal \cdo \xi}f(\cdo -x)}= \nmm f
\end{equation}
when $x=0$ and $\xi = \xi _je_j$ or when $\xi =0$ and $x=x_je_j$
for some $x_j,\xi _j\in \mathbf R$. Here $e_j$ denotes the vector
of order $j$ in the standard basis of $\rr d$. Then we only prove
\eqref{Eq:TranslModNormInv} for $\xi =0$ and $x=x_je_j$. The
other cases follow by similar arguments and are left for the reader.

\par

Let $R$ be chosen such that $R>|x_j|$. Then $Q_R$ and $-x_je_j+Q_R$ intersects.
We have
\begin{align}
\nm {f(\cdo -x_je_j)}{[R]}^2
&=
\frac 1{|Q_R|}\iint _{Q_R} \nm {e^{i\scal \cdo {\eta }}f(\cdo -(x_je_j+y))}{\maclH}^2\, dyd\eta 
\notag
\\[1ex]
&=
\frac 1{|Q_R|}\iint _{-x_je_j+Q_R} \nm {e^{i\scal \cdo {\eta}}f(\cdo -y)}{\maclH}^2\, dyd\eta 
\notag
\\[1ex]
&=
\frac 1{|Q_R|}\iint _{Q_R} \nm {e^{i\scal \cdo {\eta}}f(\cdo -y)}{\maclH}^2\, dyd\eta 
+E_R(f)
\notag
\\[1ex]
&=
\nm f{[R]}^2
+E_R(f),
\label{Eq:NormReform}
\end{align}
where
\begin{multline*}
E_R(f) = \frac 1{|Q_R|}\iint _{-x_je_j+Q_R} \nm {e^{i\scal \cdo {\eta}}f(\cdo -y)}{\maclH}^2\, dyd\eta 
\\
-
\frac 1{|Q_R|}\iint _{Q_R} \nm {e^{i\scal \cdo {\eta}}f(\cdo -y)}{\maclH}^2\, dyd\eta .
\end{multline*}
If
$$
\Delta _R = \big ( (-x_je_j+Q_R)\setminus Q_R \big )
\bigcup \big ( Q_R \setminus (-x_je_j+Q_R) \big ),
$$
then it follows that $|\Delta _R| = 2|x_j|(2R)^{2d-1}$ and that
$$
|E_R(f)| \le \frac 1{|Q_R|}
\iint _{\Delta _R} \nm {e^{i\scal \cdo {\eta}}f(\cdo -y)}{\maclH}^2\, dyd\eta .
$$
This gives
\begin{align*}
|E_R(f)| &\le \frac 1{|Q_R|}\iint _{\Delta _R} \nm {e^{i\scal \cdo {\eta}}f(\cdo -y)}{\maclH}^2\, dyd\eta 
\\[1ex]
&\le \frac {C_0}{|Q_R|}\iint _{\Delta _R} \nm f{\maclH}^2\, dyd\eta 
\\[1ex]
&=
\frac {C_0|\Delta _R|\nm f{\maclH}^2}{|Q_R|} = \frac {C_0|x_j|\nm f{\maclH}^2}{R},
\end{align*}
which tends to zero as $R$ turns to infinity.

\par

By letting $R=n_k$ in the previous analysis, \eqref{Eq:NormReform} gives
\begin{align*}
\nmm {f(\cdo -x_je_j)} &= \lim _{k\to \infty} \nm {f(\cdo -x_je_j)}{[n_k]}^2
\\[1ex]
&=
\lim _{k\to \infty} \left (
\frac 1{|Q_{n_k}|}\iint _{Q_{n_k}} \nm {e^{i\scal \cdo {\eta}}f(\cdo -y)}{\maclH}^2\, dyd\eta 
+E_{n_k}(f) \right )
\\[1ex]
&=
\nmm {f} +0,
\end{align*}
which gives (3) and thereby the result.
\end{proof}

\par

\begin{proof}[Proof of Theorem \ref{Thm:InvHilbertSpaces2}]
By Lemma
\ref{Lemma:ConvEquivHilbNorm}, we may replace
the norm for $\maclH$ by an equivalent norm which is
invariant under translations and modulations. The result now
follows from Theorem \ref{Thm:InvHilbertSpaces1}.
\end{proof}

\par

\section{Separability of Hilbert spaces embedded in distribution
spaces}\label{sec2}

\par

In this section we give a proof of that Hilbert spaces which are
continuously embedded in suitable distribution spaces are separable.
We remark that these distribution spaces can be significantly larger than
the set of tempered distributions, $\mascS '(\rr d)$.
Especially we deduce a generalization of Proposition \ref{Prop:Separability}
(see Proposition \ref{Prop:Separability2} below).

\medspace

First we introduce some test function spaces and their distribution spaces,
which are under consideration.
We recall that
the Pilipovi{\'c} space $\maclH _\flat (\rr d)$ consists of
all Hermite function expansions
\begin{equation}\label{Eq:HermFuncExp}
f(x) = \sum _{\alpha \in \nn d} c(f,\alpha )h_\alpha (x),
\end{equation}
where the Hermite coefficients $c(f,\alpha )$ should satisfy
$$
|c(f,\alpha )| \lesssim h^{|\alpha |}\alpha !^{-\frac 12},
$$
for some $h>0$. As in \cite{Toft18}, we equip $\maclH _\flat (\rr d)$
with the inductive limit topology of $\maclH _{\flat ,h} (\rr d)$ with
respect to $h>0$. Here $\maclH _{\flat ,h} (\rr d)$
is the Banach space of all smooth functions $f$ on $\rr d$
such that
$$
\nm f{\maclH _{\flat ,h}}
\equiv
\sup _{\alpha \in \nn d}
\left (
\frac {|c(f,\alpha )|\alpha !^{\frac 12}}{h^{|\alpha |}}
\right )
$$
is finite.
%
%

\par

The distribution space
$\maclH _\flat '(\rr d)$ can be identified with the set
of all formal expansions in \eqref{Eq:HermFuncExp}
such that
$$
|c(f,\alpha )| \lesssim h^{|\alpha |}\alpha !^{\frac 12},
$$
for every $h>0$. The topology of  $\maclH _\flat '(\rr d)$
is the projective limit topology of $\maclH _{\flat ,h}'(\rr d)$ with
respect to $h>0$. Here $\maclH _{\flat ,h}'(\rr d)$
is the Banach space of all smooth functions $f$ on $\rr d$
such that
$$
\nm f{\maclH _{\flat ,h}'}
\equiv
\sup _{\alpha \in \nn d}
\left (
\frac {|c(f,\alpha )|\alpha !^{-\frac 12}}{h^{|\alpha |}}
\right )
$$
is finite. It follows that $\maclH _\flat (\rr d)$ is complete and
that $\maclH _\flat '(\rr d)$ is a Fr{\'e}chet space.

\par

The distribution action between
$\maclH _\flat (\rr d)$ and $\maclH _\flat '(\rr d)$
is then given by
\begin{equation}
\label{Eq:FuncDistHermite}
(f,\phi ) _{L^2}
=
\sum _{\alpha \in \nn d} c(f,\alpha )\overline {c(\phi ,\alpha )}.
\end{equation}

\par

Next we recall some facts concerning the Gelfand-Shilov space $\Sigma _1(\rr d)$
and its distribution space $\Sigma _1'(\rr d)$, given in e.{\,}g. \cite{Pil,Toft18}.
We recall that $\Sigma _1(\rr d)$ consists of all
$f\in C^\infty (\rr d)$ such that for every $h>0$, there is a constant $C_h>0$
such that
$$
|x^\alpha \partial ^\beta f(x)| \le C_hh^{|\alpha +\beta |}\alpha !\beta !\, .
$$
The smallest choice of $C_h$ defines a semi-norm on $\Sigma _1(\rr d)$,
and by defining a (project limit) topology from these semi-norms, it
follows that $\Sigma _1(\rr d)$ is a Fr{\'e}chet space. 

\par

There are several characterizations of $\Sigma _1(\rr d)$ and its
(strong) dual or distribution space $\Sigma _1'(\rr d)$. For example,
$\Sigma _1(\rr d)$ is the set of all expansions in \eqref{Eq:HermFuncExp}
such that
$$
|c(f,\alpha )| \le C_re^{-r|\alpha |^{\frac 12}}
$$
for every $r>0$. The smallest $C_r$ defines a semi-norm for $\Sigma _1(\rr d)$,
and the (project limit) topology in this setting agrees with the topology for $\Sigma _1(\rr d)$.
We may then identify $\Sigma _1'(\rr d)$ with all expansions in \eqref{Eq:HermFuncExp}
such that
$$
|c(f,\alpha )| \le Ce^{r_0|\alpha |^{\frac 12}},
$$
for some constants $C>0$ and $r_0>0$, with distribution action given by
\eqref{Eq:FuncDistHermite} when $f\in \Sigma _1'(\rr d)$ and $\phi \in \Sigma _1(\rr d)$.

\par

In the same way we may identify $\mascS (\rr d)$ and $\mascS '(\rr d)$ as the sets of all
expansions in \eqref{Eq:HermFuncExp} such that
$$
|c(f,\alpha )| \le C_r(1+|\alpha |)^{-r}
$$
for every $r>0$, and
$$
|c(f,\alpha )| \le C(1+|\alpha |)^{r_0}
$$
for some $C>0$ and $r>0$, respectively (see e.{\,}g. \cite{ReSi}).
The distribution action is given by \eqref{Eq:FuncDistHermite}
when $f\in \mascS '(\rr d)$ and $\phi \in \mascS (\rr d)$.

\par

From these identifications it is evident that
\begin{equation}\label{Eq:TopVecSpacEmb}
\maclH _\flat (\rr d) \subseteq \Sigma _1(\rr d) \subseteq \mascS (\rr d)
\subseteq
\mascS '(\rr d) \subseteq \Sigma _1'(\rr d) \subseteq \maclH _\flat '(\rr d),
\end{equation}
with dense and continuous embeddings. Since
$\overline {h_\alpha}=(-1)^{|\alpha|}h_\alpha$,
the duality between $\maclH _\flat (\rr d)$ and $\maclH _\flat '(\rr d)$
in \eqref{Eq:FuncDistHermite} can also be expressed by
\begin{equation}
\label{Eq:FuncDistHermite2}
\scal f\phi
=
\sum _{\alpha \in \nn d} (-1)^{|\alpha |}c(f,\alpha )c(\phi ,\alpha ),
\end{equation}
and it follows that $\scal f\phi$ agrees with the usual distribution actions
between elements in $\mascS (\rr d)$ and $\mascS '(\rr d)$,
when $\phi \in \maclH _\flat (\rr d)$ and, more restrictive, $f\in \mascS '(\rr d)$.

\par

We are now prepared to state the generalizations of Lemma
\ref{Lemma:NonGaussAssump} and Proposition
\ref{Prop:Separability} in Section \ref{sec1}.

\par

\begin{lemma}\label{Lemma:NonGaussAssump2}
Suppose that $\maclH$ 
is continuously embedded in $\maclH _\flat ' (\rr d)$.
Then there is a constant $C>0$ such that \eqref{Eq:TypeCondGaussian}
holds.
\end{lemma}

\par

\begin{proof}
Note that $p_{\phi}(f)=|( f ,\phi )|$, $f\in \maclH _\flat '(\rr d)$,
is a continuous seminorm on $\maclH _\flat '(\rr d)$. The continuity of the
inclusion mapping $\maclH \to \maclH _\flat ' (\rr d)$  then yields
$p_{\phi}(f)\leq C_{\phi} \|f\|_{\maclH}$ for some $C_{\phi}>0$
and all $f\in \maclH$, which is the same as \eqref{Eq:TypeCondGaussian},
completing the proof.
\end{proof}

\par

We observe that the Hilbert structure of $\maclH$ actually plays
no role in the previous proof and Lemma
\ref{Lemma:NonGaussAssump2} therefore holds if we just
assume that $\maclH$ is a Banach space that is continuously
embedded in $\maclH _\flat ' (\rr d)$.

\par

Our generalization of Proposition \ref{Prop:Separability}
is the following.

\par

\begin{prop}\label{Prop:Separability2}
Suppose that the Hilbert space $\maclH$ 
is continuously embedded in $\maclH _\flat ' (\rr d)$.
Then $\maclH$ is separable.
\end{prop}

\par

Evidently, by \eqref{Eq:TopVecSpacEmb} it follows that
Proposition \ref{Prop:Separability2} is true with
$\Sigma _1'$ in place of $\maclH _\flat '$.

\par

We need some preparations for the proof of Proposition
\ref{Prop:Separability2}. Especially we shall make use of the Bargmann transform,
$\mathfrak V_d$, defined by
\begin{equation*}
(\mathfrak V_df)(z) =\pi ^{-\frac d4}\int _{\rr d}\exp \Big ( -\frac 12(\scal
z z+|y|^2)+2^{1/2}\scal zy \Big )f(y)\, dy,\quad z \in \cc d.
\end{equation*}
We have
$$
(\mathfrak V_df)(z) =\int_{\rr d} \mathfrak A_d(z,y)f(y)\, dy,
\quad z \in \cc d,
$$
or
\begin{equation}\label{bargdistrform}
(\mathfrak V_df)(z) =\scal f{\mathfrak A_d(z,\cdo )},
\quad z \in \cc d,
\end{equation}
where the Bargmann kernel $\mathfrak A_d$ is given by
$$
\mathfrak A_d(z,y)=\pi ^{-\frac d4} \exp \Big ( -\frac 12(\scal
zz+|y|^2)+2^{1/2}\scal zy\Big ), \quad z \in \cc d, y \in \rr d.
$$
(Cf. \cite{Ba1,Ba2}.)
Here
$$
\scal zw = \sum _{j=1}^dz_jw_j\quad \text{and} \quad
(z,w)= \scal z{\overline w}
$$
when
$$
z=(z_1,\dots ,z_d) \in \cc d\quad  \text{and} \quad w=(w_1,\dots ,w_d)\in \cc d.
$$
Otherwise $\scal \cdo \cdo $ denotes the duality between test function
spaces and their corresponding duals, as above, which is clear from the context.

\par

In \cite{Toft18}, the images of the spaces \eqref{Eq:TopVecSpacEmb} under
the Bargmann transform are presented. Let $A(\cc d)$ be the set of entire
functions on $\cc d$, and let
\begin{align*}
\maclA _{0,\infty}'(\cc d )
&\equiv
\bigcup _{r\ge 0}\sets {F\in A(\cc d)}{|F(z)|\le Ce^{\frac 12 |z|^2}(1+|z|)^r
\ \text{for some}\ C,r>0}
\\[1ex]
\maclA _{0,1}'(\cc d )
&\equiv
\bigcup _{r\ge  0}\sets {F\in A(\cc d)}{|F(z)|\le Ce^{\frac 12 |z|^2+r|z|}
\ \text{for some}\ C,r>0}
\intertext{and}
\maclA _{\flat}'(\cc d )
&\equiv
A(\cc d).
\end{align*}
We equip $\maclA _{0,\infty}'(\cc d )$ and $\maclA _{0,1}'(\cc d )$ with
inductive limit topologies through the semi-norms
\begin{alignat*}{2}
\nm F{\maclA _{0,\infty}'\, ,r} &\equiv \nm {F\cdot e^{-\frac 12 |\cdo |^2}(1+|\cdo |)^{-r}}{L^\infty},
& \quad r&>0,
\intertext{and}
\nm F{\maclA _{0,1}'\, ,r} &\equiv \nm {F\cdot e^{-(\frac 12 |z|^2+r|\cdo |)}}{L^\infty},
& \quad r&>0,
\intertext{respectively. We also equipp $\maclA _{\flat}'(\cc d ) = A(\cc d)$ with the projective
limit topology, given by the semi-norms}
\nm F{\maclA _{\flat}'\, ,r} &\equiv \nm F{B_r(0)}, &
\quad r &> 0.
\end{alignat*}

\par

In \cite{Toft18} it is proved that the Bargmann mappings
\begin{alignat}{2}
\mathfrak V_d &:& \, \mascS '(\rr d) &\to \maclA _{0,\infty}'(\cc d ),
\label{Eq:TempDistBarg}
\\[1ex]
\mathfrak V_d &:& \Sigma _1 '(\rr d) &\to \maclA _{0,1}'(\cc d )
\label{Eq:GeShDistBarg}
\intertext{and}
\mathfrak V_d &:& \maclH _\flat '(\rr d) &\to \maclA _\flat '(\cc d )
\label{Eq:PilDistBarg}
\end{alignat}
are homeomorphisms.

\par

%
%

\par

\begin{proof}[Proof of Proposition \ref{Prop:Separability2}]
We consider the Hilbert space of entire functions $\tilde{\maclH}=\mathfrak V_d(\maclH)$,
provided with the scalar product
$(\mathfrak V_d f, \mathfrak V_d g)_{\tilde{\maclH}}= (f,g)_{\maclH}$.
It thus suffices to show that $\tilde{\maclH}$ is separable. Let $z\in \cc d$. Reasoning
as in the proof of Lemma \ref{Lemma:NonGaussAssump} with the test function
$\mathfrak A_{d}(z, \: \cdot\:)$ instead of $\phi$, we obtain the existence of a
constant $C_{z}>0$ such that
$$
|\mathfrak V_d f(z)|= |\left\langle f, \mathfrak A_{d}(z, \: \cdot \:)\right \rangle |
\leq C_z \|f\|_{\maclH}= C_z \|\mathfrak V_{d}f\|_{\tilde{\maclH}}, \qquad f\in\maclH,
$$
which shows that $\tilde{\maclH}$ is a reproducing kernel Hilbert space. (See e.{\,}g.
\cite{Aro}.)

\par

Let $K_{z}(w)= K(z,w)$ be its reproducing kernel and fix a
countable and dense subset
$D$ of $\cc d$. We claim that the linear span of $\sets {K_z}{z\in D}$ is a
dense subset of $\maclH$.
Indeed, if $\ell$ is a linear functional on $\tilde \maclH$ which vanishes on
$\sets {K_z}{z\in D}$,
then by Riesz representation theorem, there is a unique $G\in \tilde {\maclH}$
such that $\ell (F)=(F,G)_{\tilde \maclH}$, for every $F\in \tilde{\maclH}$.
Since $\ell$ vanish on $\sets {K_z}{z\in D}$, we have
$$
G(z)= (G, K_z)_{\tilde{\maclH}} = \overline {\ell (K_z)} =0
\quad \text{for each}\ z\in D,
$$
giving that $G$, and thereby $\ell$, are identically zero,
in view of the density of $D$ and the continuity of $G$.
The result now follows by the Hahn-Banach theorem.
\end{proof}

\par

\begin{rem}
Let $d\mu (z)=e^{-|z|^2}d\lambda (z)$, $z\in \cc d$, where $d\lambda (z)$ is the
Lebesgue measure on $\cc d$. Then the Bargmann-Fock space, $A^2(\cc d)$,
consists of all $F\in A(\cc d)$ such that
$$
\nm F{A^2}
\equiv 
\left ( \int _{\cc d}|F(z)|^2\, d\mu (z) \right )^{\frac 12}
$$
is finite. We recall that $A^2(\cc d)$ is a Hilbert space with
scalar product
$$
(F,G)_{A^2} = \int _{\cc d}F(z)\overline {G(z)}\, d\mu (z),
\qquad F,G\in A^2(\cc d).
$$
In \cite{Ba1} it is proved that if $\maclH$ and its norm, are
equal to $L^2(\rr d)$ and its norm, then $\tilde \maclH$
and its norm, are equal to $A^2(\cc d)$ and its norm.

\par

Let
$$
\maclA _{\flat}(\cc d)
\equiv
\bigcup _{r\ge 0}\sets {F\in A(\cc d)}{|F(z)|\le Ce^{r|z|}
\ \text{for some}\ C>0},
$$
equipped with the inductive limit topology through the semi-norms
$$
\nm F{\maclA _\flat \, ,r}
\equiv
\nm {F\cdot e^{-r|\cdo |}}{L^\infty}.
$$
Evidently, $\maclA _{\flat}(\cc d)$ is continuously embedded in $A^2(\cc d)$.
In \cite{Toft18} it is proved that
\begin{enumerate}
\item the Bargmann transform is a homeomorphism
from $\maclH _\flat (\rr d)$ to $\maclA _\flat (\cc d)$;

\vrum

\item $\maclA _{\flat}(\cc d) \subseteq A^2(\cc d)\subseteq \maclA _\flat '(\cc d)$
with dense embeddings;

\vrum

\item the map $(F,G)\mapsto (F,G)_{A^2}$
from $\maclA _\flat (\cc d)\times \maclA _\flat (\cc d)$ to $\mathbf C$
extends uniquely to a continuous map from
$\maclA _\flat '(\cc d)\times \maclA _\flat (\cc d)$ or from
$\maclA _\flat (\cc d)\times \maclA _\flat '(\cc d)$ to $\mathbf C$;

\vrum

\item the (strong) dual of $\maclA _{\flat}(\cc d)$ can be
identified with $\maclA _\flat '(\cc d)$, through the form $(\cdo ,\cdo )_{A^2}$.
Furthermore,
$$
(\mathfrak V_df,\mathfrak V_dg)_{A^2}
=
(f,g)_{L^2},
\qquad
f\in \maclH _\flat '(\rr d),\ g\in \maclH _\flat (\rr d).
$$
\end{enumerate}

\par

The reproducing kernel of $A^2(\cc d)$ is given by $K_z(w)=e^{(w,z)}$
(see e.{\,}g. \cite{Ba1}). Hence
\begin{equation}\label{Eq:BargmannRepKernel}
F(z) = (F,e^{(\cdo ,z)})_{A^2},
\end{equation}
when $F\in A^2(\cc d)$. We observe that $K_z\in \maclA _\flat (\cc d)$, for every $z\in \cc d$.
Hence, the right-hand side of \eqref{Eq:BargmannRepKernel} makes sense for any
$F\in \maclA _\flat '(\cc d)$, and because $A^2(\cc d)$ is dense in $F\in \maclA _\flat '(\cc d)$,
it follows that the identity \eqref{Eq:BargmannRepKernel} still holds for any $F\in \maclA _\flat '(\cc d)$.

\par

We observe that the reproducing kernel in the proof of Proposition
\ref{Prop:Separability2} is chosen with respect to the scalar product $(\cdo ,\cdo )_{\tilde \maclH}$,
while the reproducing kernel in \eqref{Eq:BargmannRepKernel} is defined with respect to
$(\cdo ,\cdo )_{A^2}$. It follows that these kernels are not the same, when
$\maclH$ differs from $L^2(\rr d)$.
\end{rem}

\par

\section{Extensions and variations}\label{sec3}

\par

In this section we slightly improve Theorem
\ref{Thm:InvHilbertSpaces2}, and show that
the result is still true when $\maclH$ is
continuously embedded in certain distribution
spaces and ultra-distribution spaces, containing
$\mascS '(\rr d)$. In the first part we recall the
definition of the Pilipovi{\'c} ultra-distribution space
$\maclH _\flat '(\rr d)$ (also denoted by $\maclH _{\flat _1} '(\rr d)$
in \cite{Toft18}). Thereafter we extend
Theorem \ref{Thm:InvHilbertSpaces2}.

\par


\par

We have now the following extension of
Theorem \ref{Thm:InvHilbertSpaces2}.

\par

\begin{thm}\label{Thm:InvHilbertSpaces3}
Let $\maclH$ be a Hilbert space which is
continuously embedded in $\mascD '(\rr d)$ or in
$\maclH _\flat '(\rr d)$,
and such that \eqref{Eq:ContTransMod} holds true for some constant
$C_0\ge 1$ which is independent of $f\in \maclH$ and $x,\xi \in \rr d$.
If $\maclH$ is non-trivial, then $\maclH = L^2(\rr d)$, and
\eqref{Eq:NormEquiv2} holds
for some constant $C>0$ which is independent
of $f\in \maclH = L^2(\rr d)$.
\end{thm}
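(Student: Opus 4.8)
The plan is to deduce Theorem \ref{Thm:InvHilbertSpaces3} from Theorem \ref{Thm:InvHilbertSpaces2} by showing that, under the standing hypothesis \eqref{Eq:ContTransMod}, a non-trivial Hilbert space $\maclH$ which is continuously embedded in $\mascD '(\rr d)$ or in $\maclH _\flat '(\rr d)$ is automatically continuously embedded in the smaller space $\mascS '(\rr d)$. Once that embedding is established, Theorem \ref{Thm:InvHilbertSpaces2} applies verbatim and delivers both $\maclH = L^2(\rr d)$ and \eqref{Eq:NormEquiv2}. The two ambient spaces call for different arguments, and the case $\maclH \hookrightarrow \maclH _\flat '(\rr d)$ carries the actual content.

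For the case $\maclH \hookrightarrow \mascD '(\rr d)$, I would specialise \eqref{Eq:ContTransMod} to $\xi =0$, which yields the translation bound $\nm {f(\cdo -x)}{\maclH}\le C_0\nm f{\maclH}$. Thus $\maclH$ is a translation invariant Banach space continuously embedded in $\mascD '(\rr d)$, and the standard argument recalled in the introduction (cf. \cite[Proposition 1.5]{ToGuMaRa}) shows that $\maclH$ is continuously embedded in $\mascS '(\rr d)$. This reduces the $\mascD '$ case to Theorem \ref{Thm:InvHilbertSpaces2}. Since the Gaussian is not compactly supported, the coherent-state argument used below for $\maclH _\flat '$ is unavailable here, which is precisely why the two cases must be separated.

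For the case $\maclH \hookrightarrow \maclH _\flat '(\rr d)$, the first step is to recover condition \eqref{Eq:TypeCondGaussian} directly. Writing $\phi (x)=e^{-\frac 12|x|^2}$, we note that $\phi$ is a nonzero constant multiple of the ground state Hermite function $h_0\in \maclH _\flat (\rr d)$, so that $(\cdo ,\phi )_{L^2}$ is, up to a constant, the zeroth Hermite coefficient functional in \eqref{Eq:FuncDistHermite}. Because $h_0\in \maclH _\flat (\rr d)$, this functional is continuous on $\maclH _\flat '(\rr d)$, and composing with the continuous inclusion $\maclH \hookrightarrow \maclH _\flat '(\rr d)$ gives $|(f,\phi )|\le C\nm f{\maclH}$, which is \eqref{Eq:TypeCondGaussian}. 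Combining this with \eqref{Eq:ContTransMod} is the crux. For $f\in \maclH$ and $(x,\xi )\in \rr {2d}$, each time-frequency shift $e^{i\scal \cdo \xi}\phi (\cdo -x)$ again lies in $\maclH _\flat (\rr d)$, since its Hermite coefficients are the coherent-state coefficients, which decay like $h^{|\alpha |}\alpha !^{-1/2}$; hence the Gaussian short-time Fourier transform $V_\phi f(x,\xi )=(f,e^{i\scal \cdo \xi}\phi (\cdo -x))_{L^2}$ is well defined on $\maclH _\flat '(\rr d)$. Transferring the time-frequency shift from $\phi$ onto $f$ and applying \eqref{Eq:TypeCondGaussian} to the shifted element, followed by the bound \eqref{Eq:ContTransMod}, yields the uniform estimate $\sup _{x,\xi}|V_\phi f(x,\xi )|\le C'\nm f{\maclH}$. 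A uniformly bounded Gaussian short-time Fourier transform means precisely that $f$ belongs to the modulation space $M^{\infty ,\infty}(\rr d)$ with $\nm f{M^{\infty ,\infty}}\lesssim \nm f{\maclH}$, and since $M^{\infty ,\infty}(\rr d)$ is continuously embedded in $\mascS '(\rr d)$ we obtain the continuous embedding $\maclH \hookrightarrow \mascS '(\rr d)$, whereupon Theorem \ref{Thm:InvHilbertSpaces2} finishes the proof.

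The main obstacle is this last step. In the tempered setting the embedding into $\mascS '(\rr d)$ is part of the hypothesis, whereas here it must be manufactured, and one cannot run the harmonic-oscillator argument of Lemma \ref{Lemma:NonGaussAssump}, which already presupposes $\maclH \hookrightarrow \mascS '(\rr d)$. The continuity of $\maclH \hookrightarrow \mascS '(\rr d)$ therefore has to be extracted from the single Gaussian bound \eqref{Eq:TypeCondGaussian} together with the time-frequency invariance \eqref{Eq:ContTransMod}, through uniform control of the Gaussian short-time Fourier transform. The points requiring care are the verification that the time-frequency shifts of the Gaussian remain in $\maclH _\flat (\rr d)$, so that the relevant pairings are legitimate for elements of $\maclH _\flat '(\rr d)$, and the identification of a uniformly bounded Gaussian short-time Fourier transform with membership in $M^{\infty ,\infty}(\rr d)\hookrightarrow \mascS '(\rr d)$.
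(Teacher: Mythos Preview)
Your proposal is correct and follows the same strategy as the paper: reduce to Theorem \ref{Thm:InvHilbertSpaces2} by establishing $\maclH \hookrightarrow \mascS '(\rr d)$, invoking \cite[Proposition 1.5]{ToGuMaRa} in the $\mascD '$ case and a time--frequency argument in the $\maclH _\flat '$ case. The only difference is presentational: the paper carries out the $\maclH _\flat '$ step in Bargmann-transform language via Proposition \ref{Prop:TransModInvQBanachSp} (obtaining $|(\mathfrak V_df)(z)|e^{-\frac 12|z|^2}\in L^\infty$ and then using the homeomorphism \eqref{Eq:TempDistBarg}), whereas you phrase the identical estimate as a uniform bound on the Gaussian short-time Fourier transform $V_\phi f$ and land in $M^{\infty ,\infty}(\rr d)\hookrightarrow \mascS '(\rr d)$; since $\mathfrak V_df$ and $V_\phi f$ differ only by a Gaussian weight and a linear change of variables, the two arguments are equivalent, and indeed the paper's own remark after Proposition \ref{Prop:TransModInvQBanachSp} makes this identification explicit.
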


\par

The proof of Theorem \ref{Thm:InvHilbertSpaces3} 
follows by
a combination of
\cite[Proposition 1.5]{ToGuMaRa},
Theorem \ref{Thm:InvHilbertSpaces2},
and Proposition \ref{Prop:TransModInvQBanachSp} 
below.

\par

We need some preparations for the
proof of Theorem \ref{Thm:InvHilbertSpaces3}.
Since translation and modulation invariant
Hilbert spaces are in focus, we here notice that 
all the spaces in Theorem 
\ref{Thm:InvHilbertSpaces3}
are invariant under such actions,
which is explained in the next result.

\par

\begin{prop}\label{Prop:TranslModTopSpaces}
Let $x,\xi \in \rr d$. Then the map
$f\mapsto f(\cdo -x)e^{i\scal \cdo \xi}$
is a homeo\-morphism on each one of
the spaces in \eqref{Eq:TopVecSpacEmb}
and in $\mascD '(\rr d)$.
\end{prop}

\par

\begin{proof}
The result is evidently true for the
spaces
$$
\mascS (\rr d),\quad
\Sigma _1(\rr d),\quad
\mascS '(\rr d),\quad
\Sigma _1'(\rr d)
\quad \text{and}\quad
\mascD '(\rr d).
$$
We need to prove the result for
$\maclH _\flat (\rr d)$ and
$\maclH _\flat '(\rr d)$.

\par

We recall that in terms of the Weyl map
\begin{equation}\label{Eq:WeylMap}
(W_wF)(z)\equiv e^{-\frac 12|w|^2+(z,w)}F(z-w),
\quad F\in A(\cc d),\ z,w\in \cc d,
\end{equation}
the Bargmann transform
transfer translations and modulations as
\begin{equation}\label{Eq:BargTranslMod}
\begin{aligned}
(\mathfrak V_d(e^{i\scal \cdo \xi }
f(\cdo -x)))(z)
&=
e^{\frac i2\scal x\xi} (W_{\overline w/\sqrt 2}
(\mathfrak V_df))(z),
\\[1ex]
z &\in \cc d,\ w=x+i\xi \in \cc d. 
\end{aligned}
\end{equation}
Since it is evident that the Weyl map
is a homeomorphism on $\maclA _\flat '(\cc d)$
the assertion for $\maclH _\flat '(\rr d)$
is a consequence of the homeomorphism 
\eqref{Eq:PilDistBarg}.
By duality it now follows that translations
and modulations are homeomorphisms on
$\maclH _\flat (\rr d)$ as well, and
the result follows.
\end{proof}

\par

Let $\maclB$ be a vector space.
A \emph{quasi-norm} on $\maclB$ is a 
map from $\maclB$ to $\mathbf R_+\cup \{ 0\}$ such that
\begin{alignat*}{2}
\nm {\alpha f}{\maclB} &= |\alpha |\cdot \nm f{\maclB}, &
\qquad
f &\in \maclB ,\ \alpha \in \mathbf C
\intertext{and}
\nm {f+g}{\maclB}^p &\le  \nm f{\maclB}^p+\nm g{\maclB}^p, &
\qquad
f,g&\in \maclB .
\end{alignat*}
If the topology on $\maclB$ is defined through this quasi-norm,
then $\maclB$ is called a quasi-normed space.
%
%
%
%
%
A complete quasi-normed space is called a \emph{quasi-Banach space}.

\par

%
%

\par

\begin{prop}\label{Prop:TransModInvQBanachSp}
Let $\maclB$ be a quasi-Banach space which is continuously
embedded in $\maclH _\flat '(\rr d)$, and such that for some
constant $C>0$ and function $v\in L^\infty _{loc}(\rr {2d};\mathbf R_+)$,
it holds
\begin{equation}\label{Eq:ExtTranslModInv}
\nm {e^{i\scal \cdo \xi}f(\cdo -x)}{\maclB}
\le
Cv(x,\xi )\nm f{\maclB},
\quad
x,\xi \in \rr d.
\end{equation}
Then the following is true:
\begin{enumerate}
\item $\maclB$ is continuously embedded in $\Sigma _1'(\rr d)$;

\vrum

\item if in addition $v(x,\xi )\le C_0(1+|x|+|\xi |)^N$, for some constants
$C_0>0$ and $N>0$, then $\maclB$ is continuously embedded in
$\mascS '(\rr d)$.
\end{enumerate}
\end{prop}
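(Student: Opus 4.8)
The plan is to transport the entire problem to the Bargmann side through the homeomorphisms \eqref{Eq:TempDistBarg}, \eqref{Eq:GeShDistBarg} and \eqref{Eq:PilDistBarg}, since there $\Sigma _1'(\rr d)$ and $\mascS '(\rr d)$ are described by the explicit pointwise growth bounds defining $\maclA _{0,1}'(\cc d)$ and $\maclA _{0,\infty}'(\cc d)$. It then suffices to bound the growth of the entire function $\mathfrak V_df$ in terms of $\nm f{\maclB}$. The crux, and the step I expect to be the main obstacle, is that the weight $v$ in \eqref{Eq:ExtTranslModInv} is only locally bounded and hence carries no growth information by itself; I would circumvent this by replacing $v$ with the operator quasi-norm of the time-frequency action, which is submultiplicative and therefore automatically of exponential type.

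Concretely, I would set $U_{x,\xi}f=e^{i\scal \cdo \xi}f(\cdo -x)$ and let $v_0(x,\xi)$ be the operator quasi-norm of $U_{x,\xi}$ on $\maclB$, so that $v_0\le Cv$ by \eqref{Eq:ExtTranslModInv}. The identity $U_{x_1,\xi _1}U_{x_2,\xi _2}=e^{-i\scal {x_1}{\xi _2}}U_{x_1+x_2,\xi _1+\xi _2}$, in which the phase has modulus one, shows that $v_0$ is submultiplicative, $v_0(X_1+X_2)\le v_0(X_1)\,v_0(X_2)$ with $X_j=(x_j,\xi _j)$. Since $v\in L^\infty _{loc}$, the function $v_0$ is essentially bounded on the unit ball; writing a point $X$ as $(X-Y)+Y$ with $Y$ chosen outside the relevant null set upgrades this to a genuine bound $v_0\le M$ there, and then decomposing $X=\sum _{k=1}^n (X/n)$ with $n=\lceil |X|\rceil$ gives $v_0(X)\le M^{2n}\le C_1e^{c|X|}$ for suitable $C_1,c>0$. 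This exponential estimate is the heart of the matter; everything after it is bookkeeping.

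Next I would exploit the continuous embedding $\maclB\hookrightarrow \maclH _\flat '(\rr d)$: composed with \eqref{Eq:PilDistBarg} and the compact-open seminorms on $A(\cc d)$, it furnishes a constant $C_*$ with $|(\mathfrak V_df)(0)|\le C_*\nm f{\maclB}$. Applying this to $U_{x,\xi}f$ and evaluating \eqref{Eq:BargTranslMod}--\eqref{Eq:WeylMap} at $z=0$, with $w=x+i\xi$ and $w'=\overline w/\sqrt2$, yields $e^{-\frac12|w'|^2}|(\mathfrak V_df)(-w')|\le C_*v_0(x,\xi)\nm f{\maclB}$. Setting $\zeta=-w'$, so that $|\zeta|^2=|w'|^2=\frac12(|x|^2+|\xi|^2)$ and $\zeta$ ranges over all of $\cc d$, this reads
$$
|(\mathfrak V_df)(\zeta)|\le C_*\, v_0(x,\xi)\,\nm f{\maclB}\, e^{\frac12|\zeta|^2}.
$$

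To finish (1), I would insert the exponential bound $v_0(x,\xi)\le C_1e^{c\sqrt{|x|^2+|\xi|^2}}=C_1e^{\sqrt2\,c|\zeta|}$, obtaining $|(\mathfrak V_df)(\zeta)|\lesssim \nm f{\maclB}\,e^{\frac12|\zeta|^2+\sqrt2\,c|\zeta|}$, that is $\mathfrak V_df\in \maclA _{0,1}'(\cc d)$ with the relevant seminorm dominated by $\nm f{\maclB}$; the homeomorphism \eqref{Eq:GeShDistBarg} then gives the continuous embedding $\maclB\hookrightarrow \Sigma _1'(\rr d)$. For (2) I would instead use $v_0\le Cv\le CC_0(1+|x|+|\xi|)^N\lesssim (1+|\zeta|)^N$, giving $|(\mathfrak V_df)(\zeta)|\lesssim \nm f{\maclB}\,(1+|\zeta|)^Ne^{\frac12|\zeta|^2}$, i.e. $\mathfrak V_df\in \maclA _{0,\infty}'(\cc d)$, whence \eqref{Eq:TempDistBarg} yields $\maclB\hookrightarrow \mascS '(\rr d)$ continuously. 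In both cases continuity is immediate, since a single defining seminorm of the (inductive limit) target space is bounded by $\nm \cdo {\maclB}$.
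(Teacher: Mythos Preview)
Your proof is correct and follows essentially the same route as the paper: replace $v$ by the submultiplicative operator weight $v_0$, exploit its automatic exponential growth, and transport to the Bargmann side via \eqref{Eq:PilDistBarg} and \eqref{Eq:BargTranslMod} to obtain the pointwise bound that places $\mathfrak V_df$ in $\maclA _{0,1}'(\cc d)$ or $\maclA _{0,\infty}'(\cc d)$. Your exposition is somewhat more explicit---you evaluate at $z=0$, handle the null-set issue arising from $v\in L^\infty _{loc}$, and derive the exponential bound directly rather than citing \cite{Gc2}---but the strategy is identical to the paper's.
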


\par

\begin{proof}
We may assume that $C\ge 1$. Let
$$
v_0(x,\xi )\equiv \sup _{f\in \maclB \setminus 0}
\left (
\frac {\nm {e^{i\scal \cdo \xi}f(\cdo -x)}{\maclB}}{\nm f{\maclB}}
\right ) .
$$
Then $v_0\le Cv$, and $v_0$ is submultiplicative, i.{\,}e.,
\begin{equation}\label{Eq:ModerFunc}
v_0(x+y,\xi +\eta )\le v_0(x,\xi )v_0(y,\eta ),
\qquad
x,y,\xi ,\eta \in \rr d.
\end{equation}
Furthermore, \eqref{Eq:ExtTranslModInv} gives
$$
\nm {e^{i\scal \cdo \xi}f(\cdo -x)}{\maclB}
\le
v_0(x,\xi )\nm f{\maclB},
\quad
x,\xi \in \rr d.
$$
Hence we may assume that \eqref{Eq:ExtTranslModInv} holds
with $v_0$ in place of $v$, and that $C=1$. We also observe that
\eqref{Eq:ModerFunc} gives
\begin{equation}\label{Eq:Submult}
v_0(x) \le Ce^{r(|x|+|\xi |)},
\quad
x,\xi \in \rr d,
\end{equation}
for some constants $C>0$ and $r>0$ (see \cite{Gc2}).

\par

Suppose that $f\in \maclB$ and let $F=\mathfrak V_df$. By the assumptions,
\eqref{Eq:BargTranslMod} and that
\eqref{Eq:PilDistBarg} is a homeomorphism, it follows that for some $C>0$ we have
$$
|F(z-w)e^{-\frac 12|w|^2+\RE (z,w)}| v_0(\sqrt 2\overline w)^{-1}\le C,
\qquad
z\in B_r(0),\ w\in \cc d,
$$
which implies
\begin{equation}\label{Eq:AnalFuncSupEst}
\sup _{z\in \cc d} \left (
|F(z)|e^{-\frac 12|z|^2}v_0(-\sqrt 2 \overline z)^{-1}
\right )<\infty .
\end{equation}
By combining the latter estimate with \eqref{Eq:Submult} shows that
$F\in \maclA _{0,1}'(\cc d)$. The asserted embedding in (1) now follows from the homeomorphic
property of \eqref{Eq:GeShDistBarg}.

\par

The assertion (2) can be found in
e.{\,}g. \cite{GroZim}.
Here we give alternative arguments. 
Therefore, suppose that $v$ satisfies the 
estimates in (2). Then
\eqref{Eq:AnalFuncSupEst} shows that $F\in \maclA _{0,\infty}'(\cc d)$.
 The asserted embedding in (2) now follows from the homeomorphic
property of \eqref{Eq:TempDistBarg},
giving the result.
\end{proof}

\par

\begin{rem}
By the definition of modulation spaces and their mapping properties under the
Bargmann transform, it follows from the estimate 
\eqref{Eq:AnalFuncSupEst} that $\maclB$ in Proposition
\ref{Prop:TransModInvQBanachSp} (1) is continuously embedded in
the modulation space $M^\infty _{(\omega )}(\rr d)$,
with $\omega (x,\xi )=v_0(-x-i\xi )^{-1}$, $x,\xi \in \rr d$. (See e.{\,}g. \cite{Toft18}.)
\end{rem}

\par

\begin{proof}[Proof of Theorem \ref{Thm:InvHilbertSpaces3}]
If $\maclH$ is continuously embedded in $\mascD '(\rr d)$
or in $\maclH _\flat '(\rr d)$,
then \cite[Proposition 1.5]{ToGuMaRa} and
Proposition
\ref{Prop:TransModInvQBanachSp} (2) show that
$\maclH$ is continuously embedded in $\mascS '(\rr d)$.
The result now follows from
Theorem \ref{Thm:InvHilbertSpaces2}.
%
%
\end{proof}

\par

\end{document}